    \DeclareFontFamily{U}{wncy}{}
    \DeclareFontShape{U}{wncy}{m}{n}{<->wncyr10}{}
    \DeclareSymbolFont{mcy}{U}{wncy}{m}{n}
    \DeclareMathSymbol{\Sha}{\mathord}{mcy}{"58} 
\newcommand{\defi}[1]{\textsf{#1}} 
\newcommand{\Aff}{\mathbb{A}}
\newcommand{\C}{\mathbb{C}}
\newcommand{\PP}{\mathbb{P}}
\newcommand{\Q}{\mathbb{Q}}
\newcommand{\R}{\mathbb{R}}
\newcommand{\Z}{\mathbb{Z}}
\newcommand{\pp}{\mathfrak{p}}
\newcommand{\calA}{\mathcal{A}}
\newcommand{\calO}{\mathcal{O}}
\DeclareMathOperator{\Aut}{Aut}
\DeclareMathOperator{\den}{den}
\DeclareMathOperator{\Gal}{Gal}
\DeclareMathOperator{\Lie}{Lie}
\DeclareMathOperator{\N}{N}
\DeclareMathOperator{\num}{num}
\DeclareMathOperator{\rank}{rank}
\newcommand{\injects}{\hookrightarrow}
\newcommand{\Intersection}{\bigcap} 
\newcommand{\To}{\longrightarrow}
\newcommand{\union}{\cup} 
\newcommand{\Union}{\bigcup} 
\newtheorem{theorem}{Theorem}[section]
\newtheorem{lemma}[theorem]{Lemma}
\theoremstyle{definition}
\newtheorem{example}[theorem]{Example}
\theoremstyle{remark}
\newtheorem{remark}[theorem]{Remark}
\g@addto@macro\bfseries{\boldmath} 
\begin{document}

\title{Rank stability makes rings of integers diophantine}
\subjclass[2020]{Primary 11G10; Secondary 11R04, 11U05, 14K15}
\keywords{Hilbert's tenth problem, diophantine set, ring of integers, abelian variety, rank stability}
\author{Bjorn Poonen}
\thanks{This research was supported in part by National Science Foundation grant DMS-2101040 and Simons Foundation grants \#402472 and \#550033.}
\address{Department of Mathematics, Massachusetts Institute of Technology, Cambridge, MA 02139-4307, USA}
\email{poonen@math.mit.edu}
\urladdr{\url{http://math.mit.edu/~poonen/}}
\date{October 22, 2025}

\begin{abstract}
The recent negative answer to Hilbert's tenth problem over rings of integers relies on a theorem that for every extension of number fields $L/K$, if there is an abelian variety $A$ over $K$ such that $0 < \rank A(K) = \rank A(L)$, then $\calO_K$ is $\calO_L$-diophantine.
We present an alternative proof of this theorem and review how it is used.
\end{abstract}

\maketitle

\section{Introduction}\label{S:introduction}

\subsection{History}\label{S:history}

Hilbert's tenth problem asked for an algorithm to decide, given a multivariable polynomial equation with integer coefficients, whether it has a solution in integers.
By \cite{Davis-Putnam-Robinson1961,Matiyasevich1970}, there is no such algorithm.

For each number field $K$, replacing $\Z$ with the ring of integers $\calO_K$ yields a new question.
The negative answer for $\Z$ implies a negative answer for $\calO_K$ \emph{if $\Z$ is $\calO_K$-diophantine}; see Section~\ref{S:diophantine} for the definition.
This led Denef and Lipshitz \cite{Denef-Lipshitz1978} to conjecture that $\Z$ is $\calO_K$-diophantine for \emph{every} number field $K$.
Their conjecture was proved for many classes of number fields by using the structure of integer points on algebraic tori, specifically, Pell equations \cite{Denef1975,Denef-Lipshitz1978,Denef1980,Pheidas1988,Shlapentokh1989}.

Starting with Denef, various authors \cite{Denef1980,Poonen2002,Cornelissen-Pheidas-Zahidi2005,Shlapentokh2008,Mazur-Rubin-Shlapentokh2024-existential} showed that one could use elliptic curves or abelian varieties in place of algebraic tori, if certain rank conditions could be proven.
The strongest of these results states, for an extension $L/K$ of number fields, that if the condition
\[
\calA_{K,L}\colon \textup{There exists an abelian variety $A$ over $K$ such that $0 < \rank A(K) = \rank A(L)$.}
\]
holds, then $\calO_K$ is $\calO_L$-diophantine \cite[Theorem~1.1]{Mazur-Rubin-Shlapentokh2024-existential} (in fact, this result applies to some \emph{infinite} algebraic extensions as well).
Via such results, \cite{Mazur-Rubin2010,Mazur-Rubin2018,Murty-Pasten2018,GarciaFritz-Pasten2020,Pasten2023,Shnidman-Weiss2023,Kundu-Lei-Sprung2024,Ray-Weston2024} proved that $\Z$ is $\calO_F$-diophantine for many new number fields $F$.

Recently, \cite{Koymans-Pagano2025+}, using an input from additive combinatorics, constructed elliptic curves proving $\calA_{K,L}$ for enough degree~$2$ extensions $L/K$ to prove the full Denef--Lipshitz conjecture.
Soon thereafter, \cite[Theorem~1.1]{Alpoge-Bhargava-Ho-Shnidman2025+} proved $\calA_{K,L}$ for all degree~$2$ extensions $L/K$, constructing abelian varieties that were not necessarily elliptic curves.
In contrast with \cite{Koymans-Pagano2025+}, \cite{Alpoge-Bhargava-Ho-Shnidman2025+} requires no additive combinatorics beyond a number field analogue of Vinogradov's method from the 1930s.
Later, \cite[Theorem~1.2]{Zywina2025+} proved the stronger theorem that for every degree~$2$ extension $L/K$, there exist infinitely many elliptic curves $E$ over $K$ with $\rank E(K) = \rank E(L) = 1$.

\subsection{Outline}\label{S:outline}

The proof that $\Z$ is $\calO_F$-diophantine for all number fields $F$ can be broken into four independent steps:
\begin{enumerate}[\upshape (i)]
\item \label{I:totally real}
      If $E$ is a totally real number field, then $\Z$ is $\calO_E$-diophantine \cite{Denef1980}.
\item \label{I:Koymans-Pagano}
      $\calA_{K,L}$ holds for all degree~$2$ extensions $L/K$ \cite{Koymans-Pagano2025+,Alpoge-Bhargava-Ho-Shnidman2025+,Zywina2025+}.
\item \label{I:AKL implies diophantine}
$\calA_{K,L}$ implies that $\calO_K$ is $\calO_L$-diophantine \cite[Theorem~1.1]{Mazur-Rubin-Shlapentokh2024-existential}.
\item \label{I:Shlapentokh reduction}
      If $\Z$ is $\calO_E$-diophantine for all totally real $E$, and $\calO_K$ is $\calO_L$-diophantine for all degree~$2$ extensions $L/K$, then $\Z$ is $\calO_F$-diophantine for all number fields $F$.
      This reduction is due to Shlapentokh \cite[Theorem~4.8]{Mazur-Rubin-Shlapentokh2024-existential}.
\end{enumerate}

\begin{remark}
What is proved towards \eqref{I:Koymans-Pagano} determines how strong a version of \eqref{I:AKL implies diophantine} is needed.
Specifically, \cite{Koymans-Pagano2025+} constructs elliptic curves and hence needs only \cite{Shlapentokh2008}, whereas \cite{Alpoge-Bhargava-Ho-Shnidman2025+} needs the full abelian variety statement of \cite{Mazur-Rubin-Shlapentokh2024-existential}, and \cite{Zywina2025+} needs only \cite{Poonen2002}.
\end{remark}

We have nothing new to say about \eqref{I:totally real} and \eqref{I:Koymans-Pagano}.
The main purpose of this note is to give an alternative proof of \eqref{I:AKL implies diophantine}; see Theorem~\ref{T:AKL implies diophantine}\eqref{I:OK is OL-diophantine}.
The key ideas are present in the earlier works, but we introduce several simplifications.
In Section~\ref{S:Shlapentokh}, we reproduce Shlapentokh's reduction argument \eqref{I:Shlapentokh reduction} since it is short.

\subsection{Notation}

Let $K$ be a number field.
Let $\calO_K$ be its ring of integers.
Given $a \in K^\times$, there exist unique coprime ideals $I,J \subset \calO_K$ such that $(a)=I/J$; define $\num(a) \colonequals I$ and $\den(a) \colonequals J$.
If $L$ is a finite extension of $K$, and $\alpha,\beta \in \calO_L$ and $I$ is a nonzero ideal of $\calO_K$, the notation $\alpha \equiv \beta \pmod{I}$ means $I \calO_L | \num(\alpha-\beta)$.

\section{Diophantine sets}
\label{S:diophantine}

Let $R = \calO_K$ for some $K$.
For a finite-type $R$-scheme $X$, a subset $S \subset X(R)$ is \defi{$R$-diophantine} if it is $f(Y(R))$ for some finite-type morphism $Y \to X$.
It is not hard to show that a subset $S \subset R = \Aff^1(R)$ is $R$-diophantine if and only if $S = \{a \in R: (\exists x \in R^n) \; g(a,x)=0\}$ for some $g \in R[t,x_1,\ldots,x_n]$.
Finite unions of $R$-diophantine subsets are $R$-diophantine.
Any morphism of finite-type $R$-schemes $X \to X'$ maps $R$-diophantine subsets of $X(R)$ to $R$-diophantine subsets of $X'(R)$.
Applying this to $\Aff^1 \times \Aff^1 \stackrel{\textup{sum}}\To \Aff^1$ shows that if $S,T \subset R = \Aff^1(R)$ are $R$-diophantine, then so is $S+T \colonequals \{s+t: s\in S,\, t \in T\}$.

\begin{lemma}[\protect{\cite[Proposition~1(b)]{Denef-Lipshitz1978}}]
\label{L:nonzero}
The set $\calO_K-\{0\}$ is $\calO_K$-diophantine.
\end{lemma}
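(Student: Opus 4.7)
My plan is to build a diophantine definition of $\calO_K - \{0\}$ by combining two ingredients: the norm $N_{K/\Q}$, which converts the question into one about rational integers, and Lagrange's four-square theorem, which diophantine-defines $\Z - \{0\}$ inside $\Z$ and hence, in one direction, inside any ring containing $\Z$.

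The first step is a norm reduction. For any $a \in \calO_K$, set $\tilde a := \prod_{\sigma \neq \operatorname{id}} \sigma(a)$, the product over the embeddings of $K$ into a Galois closure other than the identity. Because $\tilde a$ is a product of algebraic integers fixed by $\Gal(\Qbar/K)$, it lies in $\calO_K$, and $a \tilde a = N_{K/\Q}(a) \in \Z$. Hence $a \neq 0$ iff there exists $b \in \calO_K$ with $ab \in \Z \setminus \{0\}$ (namely $b = \tilde a$).

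The second step is to detect ``$ab \in \Z \setminus \{0\}$'' polynomially. By Lagrange, a rational integer $n$ is nonzero iff $n^2 = 1 + x_1^2 + x_2^2 + x_3^2 + x_4^2$ for some $x_i \in \Z \subset \calO_K$. Combining these, the tentative definition is
\[
a \neq 0 \iff \exists\, b, x_1, x_2, x_3, x_4 \in \calO_K : (ab)^2 = 1 + x_1^2 + x_2^2 + x_3^2 + x_4^2,
\]
whose forward direction is clear (take $b = \tilde a$) and whose backward direction holds whenever $K$ has a real embedding, since applying such an embedding shows that $-1$ cannot be a sum of four squares.

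The main obstacle is the totally imaginary case, where $-1$ may itself be a sum of squares in $\calO_K$ (for example $-1 = i^2$ in $\Z[i]$), so the tentative formula admits $a = 0$ as a spurious solution. To repair this, I would augment the defining equation with additional polynomial constraints that genuinely force $ab$ to lie in $\Z$ --- either a Galois-invariance relation, or an auxiliary norm into a totally real subfield of $K$ together with an argument that in that subfield the four-square trick does detect nonvanishing. Implementing this uniformly across all number fields --- in particular the imaginary quadratic ones, where the unit group is finite and few structural tools are available --- is the step I expect to require the most care.
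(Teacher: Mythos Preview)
Your argument is incomplete, and the gap you flag for totally imaginary $K$ is not repairable along the lines you sketch. A ``Galois-invariance relation'' forcing $ab\in\Z$ is not a polynomial condition over $K$: the map $c\mapsto\sigma(c)$ for a nontrivial $\sigma\in\Aut(K/\Q)$ is $\Q$-linear but not given by a polynomial with coefficients in $K$, so one cannot simply impose $\sigma(ab)=ab$; and cutting out $\Z$ inside $\calO_K$ by any other diophantine means is precisely the Denef--Lipshitz conjecture, far harder than this lemma. Norming into a totally real subfield does not help either: for imaginary quadratic $K$ the only such subfield is $\Q$, which returns you to your starting point, and even for larger $K$ the witnesses $x_i$ in your formula still range over $\calO_K$, so whenever $-1$ is a sum of four squares in $\calO_K$ (for instance any $K$ containing $\sqrt{-1}$) the spurious solution $a=0$ survives.

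The paper's proof sidesteps all archimedean input. For any nonzero ideal $I\subset\calO_K$ there exists $x\in\calO_K$ with $(2x-1)(3x-1)\equiv 0\pmod I$: by the Chinese remainder theorem reduce to $I=\pp^e$, where either $2$ or $3$ is a unit. Taking $I=(a)$ yields
\[
a\neq 0 \quad\iff\quad (\exists\,x,y\in\calO_K)\;\; (2x-1)(3x-1)=ya,
\]
the reverse implication holding because $\tfrac12,\tfrac13\notin\calO_K$. This is uniform in $K$ and shorter than your norm-plus-Lagrange argument even in the cases where that argument does go through.
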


\begin{proof}
For any \emph{nonzero} ideal $I \subset \calO_K$, there exists $x \in \calO_K$ with $(2x-1)(3x-1) \equiv 0 \pmod{I}$ (use the Chinese remainder theorem to reduce to the case where $I$ is a power of a prime ideal).
For $a \in \calO_K$, taking $I=(a)$ shows that
\[
	a \ne 0 \quad\iff\quad (\exists x,y \in \calO_K) \; (2x-1)(3x-1) = ya.\qedhere
\]
\end{proof}

Elements of $K$ can be represented in the usual way as equivalence classes $a/b$ of pairs $(a,b)$ with $a,b \in \calO_K$ and $b \ne 0$.
Subsets of $K$ can then be identified with certain subsets of $\calO_K^2$.
Lemma~\ref{L:nonzero} lets us
\begin{itemize}
\item use polynomial equations involving $K$-valued variables in diophantine definitions and 
\item
extend the $\calO_K$-diophantine notion to subsets of $X(K)$ for finite-type $K$-schemes $X$.
\end{itemize}
For a finite extension $L/K$,
\begin{itemize}
\item
choosing a finite presentation of $\calO_L$ as an $\calO_K$-module lets us use polynomial equations involving $\calO_L$-valued variables in constructing $\calO_K$-diophantine sets, and 
\item
if $\calO_K$ is $\calO_L$-diophantine, then any $\calO_K$-diophantine set is $\calO_L$-diophantine.
\end{itemize}

Each ideal $I \subset \calO_K$ can be represented as $(i_1,i_2)$ for some $i_1,i_2 \in \calO_K$.
Conditions involving ideals can be expressed in diophantine terms in terms of the generators.
For example:
\begin{itemize}
\item $a \in I \iff (\exists x,y \in \calO_K)\;\; a=x i_1 + y i_2$;
\item $J|I \iff i_1,i_2 \in J$;
\item $I=J \iff I|J \textup{ and } J|I$; 
\item $I,J$ are coprime $\iff (\exists i \in I)(\exists j \in J) \;\; i+j=1$; 
\item for $s=a/b \in K$, we have $(s)=I/J \iff aJ=bI$; 
\item $I = \num(s) \iff (\exists J) \;\; (s)=I/J \textup{ and $I,J$ are coprime}$; and 
\item $a \equiv b \pmod{I} \iff a-b \in I$.
\end{itemize}

\section{Using congruences}

Here is an example of how to use a congruence to force an algebraic integer to belong to a smaller ring of integers:

\begin{example}
If $\alpha \in \Z[i]$ is such that $|\alpha| < 5$ and $\alpha \equiv k \pmod{10 \Z[i]}$ for some $k \in \Z$, then $\alpha \in \Z$.
One proof: $\bar{\alpha}-\alpha \in 10 \Z[i]$, but $|\bar{\alpha}-\alpha| < 5+5$, so $\bar{\alpha}=\alpha$; that is, $\alpha \in \Z$.  (The dots below are the $\alpha \in \Z[i]$ congruent to an integer modulo $10\Z[i]$.)
\end{example}

\begin{center}
\begin{tikzpicture}[scale=0.3]


\colorlet{lightblue}{blue!5!white}
\fill[lightblue] (0,0) circle(5);

\foreach \n in {-12,...,12} {
  \foreach \m in {-1,...,1} {
    \fill (\n,10*\m) circle (5pt);
  }
}

\foreach \n in {-4,...,4} {
    \fill[blue] (\n,0) circle (5pt);
}

    \node[red] at (0, 0) [below] {0}; 
    \node[red] at (10, 0) [below] {10}; 
    \node[red] at (0, 10) [below] {$10i$}; 

\fill[red] (-10,10) circle (5pt);
\fill[red] (0,10) circle (5pt);
\fill[red] (10,10) circle (5pt);

\fill[red] (-10,0) circle (5pt);
\fill[red] (0,0) circle (5pt);
\fill[red] (10,0) circle (5pt);

\fill[red] (-10,-10) circle (5pt);
\fill[red] (0,-10) circle (5pt);
\fill[red] (10,-10) circle (5pt);

\end{tikzpicture}
\end{center}

To generalize to $\calO_L \supset \calO_K$ in place of $\Z[i] \supset \Z$, and an $\calO_K$-ideal $I$ in place of $10 \Z[i]$, we use a condition $(\alpha-1) \cdots (\alpha-n) \mid I$ to express that ``$I$ is much bigger than $\alpha$'':

\begin{lemma}
\label{L:congruence}
Fix number fields $L \supset K$.
There exists $n \ge 1$ such that for all $\alpha \in \calO_L$, all nonzero ideals $I \subset \calO_K$, and all $k \in K$,
\[
	(\alpha-1) \cdots (\alpha-n) \mid I \quad \textup{and} \quad \alpha \equiv k \pmod{I} \quad \implies \quad \alpha \in \calO_K.
\]
\end{lemma}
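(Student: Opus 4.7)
The plan is to generalize the archimedean argument of the $\Z[i]/\Z$ example: combine the two hypotheses so that if $\alpha \notin \calO_K$, then some Galois difference $\sigma\alpha - \alpha$ will have incompatible lower and upper archimedean bounds. Let $M$ be the Galois closure of $L/K$ with $G = \Gal(M/K)$, and set $d = [L:K]$. First, combine the hypotheses: since $\alpha - k \in I\calO_L \subseteq P(\alpha)\calO_L$, the element $\mu \colonequals (\alpha-k)/P(\alpha)$ lies in $\calO_L$, and proving $\alpha \in \calO_K$ is equivalent to $\mu = 0$. For each $\sigma \in G$, applying $\sigma$ to the congruence $\alpha - k \in I\calO_L$ gives $\sigma\alpha - k \in \sigma(I\calO_L) \subseteq I\calO_M$, so $\beta_\sigma \colonequals \sigma\alpha - \alpha \in I\calO_M$. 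If $\alpha \notin K$, some $\beta_\sigma$ is nonzero, and since a nonzero element of $I\calO_M$ generates a subideal, we have the lower bound $|\N_{M/\Q}(\beta_\sigma)| \ge \N(I)^{[M:K]}$.

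For the contradictory upper bound, taking norms in $P(\alpha)\calO_L \supseteq I\calO_L$ yields $|\N_{L/\Q}(P(\alpha))| \le \N(I)^d$, and raising to the $[M:L]$ power gives the archimedean product bound
\[
   \prod_{\tau : M \to \C} |P(\tau\alpha)| \;=\; |\N_{M/\Q}(P(\alpha))| \;\le\; \N(I)^{[M:K]}.
\]
The heart of the proof is to parlay this into a bound on $\prod_\tau |\tau\beta_\sigma|$. At each embedding $\tau$, select $j_\tau \in \{1, \dots, n\}$ minimizing $|\tau\alpha - j_\tau|$, so that the AM–GM / pigeonhole inequality yields $|\tau\alpha - j_\tau| \le |P(\tau\alpha)|^{1/n}$ and likewise $|\tau\sigma\alpha - j'_\tau| \le |P(\tau\sigma\alpha)|^{1/n}$ for an analogous $j'_\tau$. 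Writing $\tau\beta_\sigma = (\tau\sigma\alpha - j_\tau) - (\tau\alpha - j_\tau)$ and using the triangle inequality in tandem with these bounds, one expects $|\tau\beta_\sigma|^n$ to be bounded by a product of $|P(\tau\alpha)|$ and $|P(\tau\sigma\alpha)|$ up to factors depending only on $n$ and $[L:K]$ (from the discrete distances between the integers $1, \dots, n$). Multiplying over $\tau$ then gives $|\N_{M/\Q}(\beta_\sigma)|$ on the order of $\N(I)^{[M:K]/n}$, contradicting the lower bound once $n$ is sufficiently large.

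The main obstacle is making the archimedean estimate effective: it must handle the possibility that $\tau\alpha$ and $\tau\sigma\alpha$ are near different integers $j_\tau \ne j'_\tau$, in which case $|\tau\beta_\sigma|$ is of order $|j_\tau - j'_\tau|$ rather than $|P(\tau\alpha)|^{1/n}$. The fix, analogous to the $|b| \in N\Z$ vs $|b|^n \le N$ dichotomy in the excerpt's Gaussian example, is that the congruence $\alpha \equiv k \pmod I$ forces $\tau\alpha$ and $\tau\sigma\alpha$ to be close in the lattice $\tau(I\calO_L)$, so the discrete gap forces $n$ (chosen large in terms of $[L:K]$ and constants from the Minkowski geometry of $\calO_K$) to kill this term. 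This calibration of $n$ is the only delicate ingredient.
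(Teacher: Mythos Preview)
Your high-level strategy matches the paper's: pass to a Galois closure, use $\sigma\in\Gal$ fixing $K$ but moving $\alpha$, deduce that $P(\alpha)\colonequals(\alpha-1)\cdots(\alpha-n)$ divides $\beta_\sigma\colonequals\sigma\alpha-\alpha$ (via $I$), and seek an archimedean contradiction from norms. However, the execution of the archimedean step is where all the content lies, and your outline has two real gaps.

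First, routing the contradiction through powers of $\N(I)$ cannot work uniformly. You aim for $|\N_{M/\Q}(\beta_\sigma)|\lesssim C_n\cdot\N(I)^{[M:K]/n}$ to contradict $|\N_{M/\Q}(\beta_\sigma)|\ge\N(I)^{[M:K]}$, but this yields only $\N(I)^{[M:K](1-1/n)}\le C_n$, which is no contradiction when $\N(I)$ is small (e.g.\ $I=\calO_K$). The paper avoids this entirely by never isolating $\N(I)$: it uses $P(\alpha)\mid\beta_\sigma$ directly to get $|\N(P(\alpha))|\le|\N(\beta_\sigma)|$ and compares both sides to $(2M)^\ell$, where $M=\max_\tau|{}^\tau\alpha|$ and $\ell=[L:\Q]$.

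Second, your archimedean estimate is too crude and the ``fix'' is not a fix. The bound $|\tau\alpha-j_\tau|\le|P(\tau\alpha)|^{1/n}$ is correct but loses far too much: it throws away the large factors $|\tau\alpha-j|$ for $j\ne j_\tau$, which are exactly what makes $|\N(P(\alpha))|$ big. Your acknowledgment that $j_\tau\ne j'_\tau$ is the obstacle is right, but the proposed remedy---that $\tau\alpha$ and $\tau\sigma\alpha$ are ``close in the lattice $\tau(I\calO_L)$''---is not meaningful: the image of $I\calO_L$ under a single complex embedding $\tau$ is not a lattice and carries no archimedean smallness. The paper's argument goes the other way: it bounds $|\N(\beta_\sigma)|\le(2M)^\ell$ trivially, then \emph{lower}-bounds $|\N(P(\alpha))|=\prod_{j,\tau}|{}^\tau\alpha-j|$ by a careful count: for each $\tau$ at most $20$ values of $j$ satisfy $|{}^\tau\alpha-j|<10$, so outside a set of at most $20\ell$ indices every factor is $\ge10$ (or $\ge M/2$ when $M$ is large), and the remaining ``bad'' columns still have $|\N(\alpha-j)|\ge1$. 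This gives $|\N(P(\alpha))|>(2M)^\ell$ once $n$ is chosen large in terms of $\ell$ alone.

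(A minor side note: your claim that $\mu=(\alpha-k)/P(\alpha)\in\calO_L$ and that $\alpha\in\calO_K\Leftrightarrow\mu=0$ is incorrect on both counts---$k$ need not be integral, and $\mu=0$ means $\alpha=k$, not $\alpha\in\calO_K$---but you never use $\mu$ anyway.)
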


\begin{proof}
Enlarge $L$ to assume that $L/\Q$ is Galois.
Let $\ell=[L:\Q]$.
Choose $n$ such that $n > 23\ell$ and $10^{n-2\ell} > (4n)^\ell$.
Below, $j$ ranges over integers in $[1,n]$, and $\tau$ ranges over elements of $\Gal(L/\Q)$.
Given $\alpha \in \calO_L$, embed $L$ in $\C$ so that $|\alpha| \ge \lvert {}^\tau \alpha \rvert$ for all $\tau$.
Let $M=|\alpha|$.

Suppose that $(\alpha-1) \cdots (\alpha-n) \mid I$ and $\alpha \equiv k \pmod{I}$, but $\alpha \notin \calO_K$.
Choose $\sigma \in \Gal(L/K)$ with ${}^\sigma \alpha \ne \alpha$.
Applying $\sigma$ to $\alpha \equiv k \pmod{I}$ and subtracting gives
$I \mid ({}^\sigma \alpha - \alpha)$, so
$(\alpha-1)\cdots(\alpha-n) \mid ({}^\sigma \alpha - \alpha)$.
Apply the norm $\N \colon L \to \Q$ and then $|\;|$:
\[
    \prod_{j,\tau} \lvert {}^\tau \alpha - j \rvert
        \le \prod_\tau \lvert {}^{\tau\sigma} \alpha - {}^\tau \alpha \rvert 
        \le (2M)^\ell \quad\textup{(since $\lvert {}^\tau \alpha \rvert \le |\alpha| = M$ for all $\tau$).}
\]

We will contradict the last line by proving that many terms on the left are large.
For each $\tau$, let $J_\tau = \{j : \lvert {}^\tau \alpha - j \rvert < 10 \}$,
so $\#J_\tau \le 20$.
Let $J_0 \colonequals \Union_\tau J_\tau$, so $\#J_0 \le 20\ell$.
Let $J_1 \colonequals \{1,\ldots,n\} - J_0$, so $\#J_1 \ge n-20\ell$.
\begin{itemize}
\item If $M \ge 2n$ (so in particular $M \ge 4$, so $(M/2)^3 \ge 2M$), then $\prod_{j \in J_1} \lvert \alpha-j \rvert \ge (M/2)^{\#J_1} \ge (M/2)^{n-20\ell} > (M/2)^{3\ell} \ge (2M)^\ell$.
If $M < 2n$, then $\prod_{j \in J_1} \lvert \alpha-j \rvert \ge 10^{\#J_1} \ge 10^{n-20\ell} > (4n)^\ell > (2M)^\ell$.
\item For $\tau \ne 1$, we have $\prod_{j \in J_1} \lvert {}^\tau \alpha-j \rvert \ge \prod_{j \in J_1} 10 \ge 1$.
\item For $j \in J_0$, we use $\prod_\tau \lvert {}^\tau \alpha-j \rvert = \lvert \N(\alpha-j) \rvert \ge 1$.
\end{itemize}
Multiplying these inequalities gives $\prod_{j,\tau} \lvert {}^\tau \alpha - j \rvert > (2M)^\ell$, a contradiction.
\end{proof}

\section{Weakly approximating \texorpdfstring{$\Z$}{Z}} \label{S:weak}

Let $K$ be a number field.
For each prime ideal $\pp \subset \calO_K$, let $K_\pp$ be the completion.
Let $S \subset K$.
Say that $S$ \defi{weakly approximates $\Z$} if any of the following equivalent conditions holds:
\begin{enumerate}[\upshape (i)]
\item  $\Z$ is contained in the closure of $S$ in $\prod_\pp K_\pp$.
\item for every $k \in \Z$ and primes $\pp_1,\ldots,\pp_m$ of $\calO_K$, there is a sequence in $S$ converging to $k$ in $K_{\pp_i}$ simultaneously for every $i$;
\item for every $k \in \Z$ and nonzero ideal $I \subset \calO_K$, the congruence $x \equiv k \pmod{I}$ has a solution in $S$.
\end{enumerate}

\begin{lemma}
\label{L:numerator}
If $S \subset K$ weakly approximates $\Z$ and $0 \ne \beta \in \calO_K$, then there exists $s \in S$ with $\beta \mid \num(s)$.
\end{lemma}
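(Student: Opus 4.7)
The plan is straightforward: apply the weak-approximation hypothesis with target $k=0$ and modulus tuned to $\beta$, then translate $\pp$-adic closeness to $0$ into a divisibility statement on numerators.

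Concretely, let $\pp_1,\ldots,\pp_m$ be the prime ideals of $\calO_K$ dividing $\beta$, and let $e_i = v_{\pp_i}(\beta)$. Using condition~(ii) of weak approximation with $k=0$ and this finite list of primes, I would extract a sequence in $S$ converging to $0$ in every $K_{\pp_i}$, and then choose a term $s$ far enough along the sequence that $v_{\pp_i}(s) \ge e_i$ for every $i$. (Equivalently, one can directly invoke condition~(iii) with $I=(\beta)$ once the congruence notation has been extended from $\calO_K$ to $K$ in the obvious valuation-theoretic way.)

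To conclude $\beta \mid \num(s)$, I would observe that because $\beta \in \calO_K$, the inequality $v_{\pp_i}(s) \ge e_i \ge 0$ forces $\pp_i \nmid \den(s)$; consequently $v_{\pp_i}(\num(s)) = v_{\pp_i}(s) \ge e_i = v_{\pp_i}(\beta)$ for each $i$. Since primes $\pp \nmid \beta$ impose no condition, this yields $(\beta) \mid \num(s)$, as required.

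There is no real obstacle: the lemma is a bookkeeping statement converting ``$s$ is very $\pp$-adically small'' into ``$\num(s)$ is very $\pp$-adically divisible.'' The one wrinkle worth flagging is that $s$ need not lie in $\calO_K$, so one must separately note that $\pp_i$-adic smallness at the primes dividing $\beta$ rules those primes out of $\den(s)$ before reading the valuation estimate off of $\num(s)$.
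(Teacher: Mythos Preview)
Your proposal is correct and is essentially the paper's approach. The paper's own proof is the one-liner ``The congruence $x \equiv 0 \pmod{(\beta)}$ has a solution in $S$'': since the paper's congruence notation is defined so that $x \equiv 0 \pmod{I}$ means $I \mid \num(x)$, invoking condition~(iii) with $k=0$ and $I=(\beta)$ gives the conclusion immediately, with no valuation bookkeeping needed. Your route through condition~(ii) just re-derives this by hand.
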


\begin{proof}
The congruence $x \equiv 0 \pmod{(\beta)}$ has a solution in $S$.
\end{proof}

\begin{theorem} 
\label{T:AKL implies diophantine}
For an extension of number fields $L/K$, if $\calA_{K,L}$ holds, then
\begin{enumerate}[\upshape (a)]
\item there exists an infinite $\calO_L$-diophantine subset $T \subset K$; and 
\item there exists an $\calO_L$-diophantine subset $S \subset K$ that weakly approximates $\Z$;
\item there exists an $\calO_L$-diophantine subset $U$ with $\Z \subset U \subset \calO_K$;
\item \label{I:OK is OL-diophantine}
the subset $\calO_K$ is $\calO_L$-diophantine.
\end{enumerate}
\end{theorem}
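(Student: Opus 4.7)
The plan is to prove (a)--(d) in sequence, each feeding into the next; the main technical work is concentrated in (b), while (c) is an application of Lemma~\ref{L:congruence} combined with pigeonhole and (d) is a clean basis trick. For (a), take an abelian variety $A/K$ realizing $\calA_{K,L}$. Since $\rank A(K) = \rank A(L)$ and $A(L)$ is finitely generated, $A(L)/A(K)$ is finite; let $N$ be its exponent, so that $[N]A(L) \subseteq A(K)$. The set $[N]A(L)$ is $\calO_L$-diophantine (as the image of the multiplication-by-$N$ endomorphism of $A$ applied to $A(L)$) and infinite (since $\rank A(K) > 0$). Composing with a non-constant rational function $f \colon A \dashrightarrow \Aff^1$ regular on an integral affine chart yields an infinite $\calO_L$-diophantine subset $T \subseteq \calO_K$.

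For (b), the main obstacle, I would fix a non-torsion $P_0 \in [N]A(L)$ and a rational function $t$ on $A$ vanishing to order one at the identity $O$. For any prime $\pp$ of good reduction, some multiple $r_\pp P_0$ lies in the kernel of reduction $A^1(K_\pp)$; within the formal group, multiplication by $n$ multiplies the uniformizing parameter by $n$ to leading order, so $v_\pp(t([n r_\pp]P_0)) \to \infty$ with $v_\pp(n)$. Simultaneous approximation at finitely many primes is arranged by taking multiples divisible by enough orders and residue-characteristic powers, producing an $\calO_L$-diophantine subset of $K$ weakly approximating~$0$. Upgrading this to weak approximation of all of $\Z$ is the subtle step; the natural route is to incorporate the group law via translates, for instance replacing $t(P)$ with $t(P+R)-t(R)$ for $R$ varying in an auxiliary diophantine anchor set, so that every integer residue class mod any ideal is hit.

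For (c), let $n$ be as in Lemma~\ref{L:congruence} and define the $\calO_L$-diophantine set
\begin{multline*}
U \colonequals \{1,\ldots,n\} \cup \{\alpha \in \calO_L : \exists\, s \in S,\ \exists\, t_1, t_2 \in T,\ t_1 \neq t_2, \\
(\alpha-1)\cdots(\alpha-n) \mid \num(t_1-t_2) \text{ and } \alpha \equiv s \pmod{\num(t_1-t_2)}\}.
\end{multline*}
Applying Lemma~\ref{L:congruence} with $I \colonequals \num(t_1-t_2) \subseteq \calO_K$ and $k \colonequals s \in K$ forces $U \subseteq \calO_K$. Conversely, given $\alpha \in \Z \setminus \{1,\ldots,n\}$, the integer $c \colonequals (\alpha-1)\cdots(\alpha-n)$ is nonzero; since $T \subseteq \calO_K$ is infinite and $\calO_K/(c)$ is finite, pigeonhole supplies $t_1 \neq t_2$ in $T$ with $c \mid (t_1-t_2)$, and then weak approximation of $\Z$ by $S$ supplies $s \in S$ with $s \equiv \alpha \pmod{\num(t_1-t_2)}$. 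Hence $\Z \subseteq U$.

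For (d), fix a $\Z$-basis $\omega_1, \ldots, \omega_d$ of $\calO_K$; each $\omega_i$ is a constant in $\calO_L$. The set $V \colonequals U\omega_1 + \cdots + U\omega_d$ is $\calO_L$-diophantine (as a Minkowski sum of images of $U$ under the multiplication-by-$\omega_i$ morphisms). From $U \subseteq \calO_K$ we get $V \subseteq \calO_K$, and from $\Z \subseteq U$ we get $\calO_K = \Z\omega_1 + \cdots + \Z\omega_d \subseteq V$. Thus $V = \calO_K$ is $\calO_L$-diophantine, completing the proof.
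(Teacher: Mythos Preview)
Your overall architecture (a)$\to$(b)$\to$(c)$\to$(d) matches the paper's, and parts (a), (c), (d) are correct with only cosmetic differences. In (c) you use pigeonhole on the infinite set $T \subset \calO_K$ to manufacture the ideal $I = \num(t_1-t_2)$, whereas the paper uses Lemma~\ref{L:numerator} applied to $S$ itself to produce $I=\num(s)$; both are fine. Part (d) is identical to the paper's.

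The genuine gap is in (b). You correctly produce elements that are $\pp$-adically close to $0$, but your proposed upgrade to all of $\Z$ via ``translates, for instance replacing $t(P)$ with $t(P+R)-t(R)$'' does not do what you want: if $P$ is close to the identity in $A(K_\pp)$ then $P+R$ is close to $R$, so $t(P+R)-t(R)$ is again close to $0$, not to a prescribed integer $k$. The paper's device is different and is the key idea you are missing: one takes \emph{ratios} $y(Q)/y(P)$ with $P,Q \in A(K)$ and $y$ a local parameter at the identity, and sets $Q=kR$, $P=R$. As $R\to 0$ along a $\pp$-adic analytic arc one has $y(kR)/y(R)\to k$ by l'H\^opital, which is exactly weak approximation of $k$. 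Concretely, with $a\in A(K)$ of infinite order, the paper takes $R=N!\,a$ and lets $N\to\infty$.

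There is a second issue you did not address: when $\dim A=g>1$, a single function $t$ vanishing to order one at $O$ need not restrict to a uniformizer along the particular arc traced out by the multiples of $a$ in $A(K_\pp)$, and the bad direction may depend on $\pp$. The paper handles this by letting $y$ range over $\sum_{i=1}^g T\,y_i$ for a full system of local parameters $y_1,\ldots,y_g$; since $T$ is infinite, for any finite list $\pp_1,\ldots,\pp_m$ one can choose coefficients $(t_1,\ldots,t_g)\in T^g$ avoiding each of the bad hyperplanes $H_{\pp_i}$. This is precisely why part~(a) feeds into part~(b). (A remark in the paper notes that W\"ustholz-type transcendence results make a single $y_1$ suffice when $A$ is simple, but the elementary argument needs the whole family.)
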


\begin{proof}
Fix $A$ as in $\calA_{K,L}$.
Let $r=(A(L):A(K))$.
Then $A(K)$ is a finite union of cosets of $r A(L)$, so $A(K)$ is $\calO_L$-diophantine.
\begin{enumerate}[\upshape (a)]
\item 
Choose a closed immersion $A \injects \PP^N_K$ for some $N$,
and let $T$ be the set of ratios of projective coordinates
of the points in $A(K) \subset \PP^N(K)$,
excluding ratios with denominator $0$.
Since $A(K)$ is infinite, $T$ is infinite.
By definition, $T$ is $\calO_L$-diophantine.
\item
Let $y_1,\ldots,y_g \in K(A)$ be local parameters for $A$ at $0$.
Define
\[
	S = \left\{ \frac{y(Q)}{y(P)} : P,Q \in A(K), \; y \in \sum_{i=1}^g T y_i \right\} \quad \subset K;
\]
we exclude ratios $y(Q)/y(P)$ in which $y(P)$ or $y(Q)$ is undefined or in which $y(P)=0$.
By definition, $S$ is $\calO_L$-diophantine.

Let $k \in \Z$.
Let $p$ be a prime number.
Let $\pp$ be a prime of $\calO_K$ above $p$.
If $R \to 0$ along a smooth analytic arc in the $p$-adic manifold $A(K_\pp)$ and $y \in K(A)$ is a uniformizer at $0$ along this arc,
then $y(k R)/y(R) \to k$ in $K_\pp$ (l'H\^opital's rule).
Let $a \in A(K)$ be a point of infinite order.
Using formal group coordinates shows that $N! a \in A(K_\pp)$ tends to $0$ along such an arc as $N \to \infty$,
and the function $y \colonequals \sum t_i y_i$ is a uniformizer at $0$ along the arc for any $(t_1,\ldots,t_g) \in K^g$ outside a hyperplane $H_\pp \subset \Aff^g_{K_\pp}$.
Now, given \emph{finitely many} primes $\pp_1,\ldots,\pp_m$ of $\calO_K$,
we can choose $(t_1,\ldots,t_g) \in T^g$ outside all of $H_{\pp_1},\ldots,H_{\pp_m}$, 
since $T$ is infinite;
then $y(k(N! a))/y(N! a) \to k$ in $K_{\pp_i}$ for each $i \in \{1,\ldots,m\}$.
Thus $S$ weakly approximates $\Z$.
\item
Let $n$ be as in Lemma~\ref{L:congruence}.
Let $U'$ be the set of $\alpha \in \calO_L$ such that
there exist $k \in S$ and $I = \num(s)$ for some $s \in S$
such that
$(\alpha-1)\cdots(\alpha-n) \mid I$ and $\alpha \equiv k \pmod{I}$.
The end of Section~\ref{S:diophantine} implies that $U'$ is $\calO_L$-diophantine.
By Lemma~\ref{L:congruence}, $U' \subset \calO_K$.

If $\alpha \in \Z - \{1,\ldots,n\}$,
Lemma~\ref{L:numerator} provides $s \in S$ such that
$(\alpha-1)\cdots(\alpha-n) \mid I \colonequals \num(s)$.
Since $S$ weakly approximates $\Z$,
there exists $k \in S$ such that $k \equiv \alpha \pmod{I}$.
Thus $\alpha \in U'$.

Take $U \colonequals U' \union \{1,\ldots,n\}$, which is $\calO_L$-diophantine.
\item
Let $b_1,\ldots,b_\kappa$ be a $\Z$-basis of $\calO_K$.
Then $\calO_K = \sum_{i=1}^\kappa U b_i$, which is $\calO_L$-diophantine.
\qedhere
\end{enumerate}
\end{proof}

\begin{remark}
In order to guarantee that some $y$ was a uniformizer along the arc for each of $\pp_1,\ldots,\pp_m$, we let $y$ range over all linear combinations of $y_1,\ldots,y_g$ with coefficients in an infinite set $T$.
But in fact, if we assume (as we may) that $A$ is simple, then already $y_1$ suffices, because the $p$-adic analogue of W\"ustholz's analytic subspace theorem implies that for every $\pp$, the $\pp$-adic logarithm $\log_\pp a \in \Lie A_{K_\pp}$ does not lie in any hyperplane defined over $K$; see \cite[Theorem~1]{Matev2010+} or \cite[Proposition~2.5]{Fuchs-Pham2015}.
\end{remark}

\section{Shlapentokh's reduction}
\label{S:Shlapentokh}

\begin{theorem}[Shlapentokh]
\label{T:Z is diophantine}
Assume that
\begin{itemize}
\item $\Z$ is $\calO_E$-diophantine for every totally real number field $E$, and
\item $\calO_K$ is $\calO_L$-diophantine for every degree~$2$ extension of number fields $L/K$.
\end{itemize}
Then $\Z$ is $\calO_F$-diophantine for every number field $F$.
\end{theorem}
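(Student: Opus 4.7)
The plan is to pass to the Galois closure $\tilde F/\Q$, identify a canonical totally real subfield $M \subseteq \tilde F$ so that hypothesis~(a) yields $\Z$ is $\calO_M$-diophantine, use hypothesis~(b) to realize $\calO_M$ as a finite intersection of $\calO_{\tilde F}$-diophantine sets, and finally descend from $\calO_{\tilde F}$ back to $\calO_F$.

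Let $G \colonequals \Gal(\tilde F/\Q)$ and let $N \leq G$ be the subgroup generated by all elements of order at most $2$. Since the set of such elements is stable under conjugation, $N$ is normal in $G$, and $M \colonequals \tilde F^N$ is Galois over $\Q$. The key observation is that $M$ is totally real: for any embedding $\tilde F \hookrightarrow \C$, the induced complex conjugation is an element of $G$ of order at most $2$, hence lies in $N$ and fixes $M$ pointwise, so every embedding of $M$ lands in $\R$. Hypothesis~(a) therefore yields that $\Z$ is $\calO_M$-diophantine.

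For each nontrivial involution $\tau \in G$, the extension $\tilde F/\tilde F^{\langle\tau\rangle}$ has degree $2$, so hypothesis~(b) makes $\calO_{\tilde F^{\langle\tau\rangle}}$ a $\calO_{\tilde F}$-diophantine subset of $\calO_{\tilde F}$. Since $G$ is finite and
\[
\bigcap_\tau \calO_{\tilde F^{\langle\tau\rangle}} \;=\; \calO_{\bigcap_\tau \tilde F^{\langle\tau\rangle}} \;=\; \calO_{\tilde F^N} \;=\; \calO_M,
\]
the set $\calO_M$ is a finite intersection of $\calO_{\tilde F}$-diophantine sets, hence itself $\calO_{\tilde F}$-diophantine. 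Invoking the principle of Section~\ref{S:diophantine} that any $\calO_K$-diophantine set is $\calO_L$-diophantine once $\calO_K$ is $\calO_L$-diophantine, the preceding paragraph's conclusion upgrades to: $\Z$ is $\calO_{\tilde F}$-diophantine.

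Finally, from $F \subseteq \tilde F$ the Weil-restriction observation of Section~\ref{S:diophantine}, which allows $\calO_{\tilde F}$-valued variables in $\calO_F$-diophantine definitions via a finite $\calO_F$-presentation of $\calO_{\tilde F}$, rewrites the existential quantifier over $\calO_{\tilde F}$ as one over $\calO_F$ and gives $\Z$ is $\calO_F$-diophantine. The only nontrivial step is verifying that $M$ is totally real, which reduces to the one-line fact that every complex conjugation has order at most $2$; extreme cases fit uniformly (if $|G|$ is odd then $N = \{1\}$ and $M = \tilde F$ is itself totally real, while if $G$ is generated by involutions then $N = G$ and $M = \Q$).
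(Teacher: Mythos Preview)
Your proof is correct and follows essentially the same route as the paper: pass to the Galois closure, intersect fixed fields of involutions to obtain a totally real subfield whose ring of integers is $\calO_{\tilde F}$-diophantine, apply hypothesis~(a), and descend. The only cosmetic difference is that you intersect over \emph{all} involutions of $G$ rather than just the complex conjugations as the paper does; this yields a possibly smaller totally real field $M\subseteq E\colonequals\bigcap_\sigma \tilde F^\sigma$, which is harmless (and your normality remark about $N$ is correct but unused).
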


\begin{proof}
If $F'/F$ is a finite extension and $\Z$ is $\calO_{F'}$-diophantine,
then $\Z$ is also $\calO_F$-diophantine.
Thus we may enlarge $F$ to assume that $F$ is Galois over $\Q$.

For each complex conjugation $\sigma \in \Aut F$ arising from a nonreal emebedding $F \injects \C$, we have $[F:F^\sigma]=2$, so $\calO_{F^\sigma}$ is $\calO_F$-diophantine.
Let $E = \Intersection_\sigma F^\sigma$.
Then the intersection $\calO_E = \Intersection_\sigma \calO_{F^\sigma}$ is $\calO_F$-diophantine.
On the other hand, $E$ is totally real, so $\Z$ is $\calO_E$-diophantine by assumption.
By transitivity, $\Z$ is $\calO_F$-diophantine.
\end{proof}

\begin{remark}
\cite[Corollary~2.5]{Koymans-Pagano2025+} proved $\calA_{K,L}$ (and hence that $\calO_K$ is $\calO_L$-diophantine) not for all degree~$2$ extensions $L/K$, but only those satisfying all of the following additional assumptions: $K \subset \R$, $L$ is Galois over $\Q$, and $L \supset L_0 \colonequals \Q(\sqrt{-1},\sqrt{5},\sqrt{7},\sqrt{11},\sqrt{13},\sqrt{17},\sqrt{19})$.
But it is easy to adapt the proof of Theorem~\ref{T:Z is diophantine} to use its second hypothesis only for these extensions, by enlarging $F$ to contain $L_0$.
\end{remark}




\bibliographystyle{amsalpha}
\bibliography{h10_over_OK}

\providecommand{\bysame}{\leavevmode\hbox to3em{\hrulefill}\thinspace}
\providecommand{\MR}{\relax\ifhmode\unskip\space\fi MR }
\providecommand{\MRhref}[2]{%
  \href{http://www.ams.org/mathscinet-getitem?mr=#1}{#2}
}
\providecommand{\href}[2]{#2}
\begin{thebibliography}{ABHS25}

\bibitem[ABHS25]{Alpoge-Bhargava-Ho-Shnidman2025+}
Levent Alp{\"o}ge, Manjul Bhargava, Wei Ho, and Ari Shnidman, \emph{Rank
  stability in quadratic extensions and {Hilbert's} tenth problem for the ring
  of integers of a number field}, \url{https://arxiv.org/abs/2501.18774v1}
  [math.NT], 2025.

\bibitem[CPZ05]{Cornelissen-Pheidas-Zahidi2005}
Gunther Cornelissen, Thanases Pheidas, and Karim Zahidi, \emph{Division-ample
  sets and the {D}iophantine problem for rings of integers}, J. Th\'eor.
  Nombres Bordeaux \textbf{17} (2005), no.~3, 727--735. \MR{2212121}

\bibitem[Den75]{Denef1975}
J.~Denef, \emph{Hilbert's tenth problem for quadratic rings}, Proc. Amer. Math.
  Soc. \textbf{48} (1975), 214--220. \MR{360513}

\bibitem[Den80]{Denef1980}
\bysame, \emph{Diophantine sets over algebraic integer rings. {II}}, Trans.
  Amer. Math. Soc. \textbf{257} (1980), no.~1, 227--236. \MR{549163}

\bibitem[DL78]{Denef-Lipshitz1978}
J.~Denef and L.~Lipshitz, \emph{Diophantine sets over some rings of algebraic
  integers}, J. London Math. Soc. (2) \textbf{18} (1978), no.~3, 385--391.
  \MR{518221}

\bibitem[DPR61]{Davis-Putnam-Robinson1961}
Martin Davis, Hilary Putnam, and Julia Robinson, \emph{The decision problem for
  exponential diophantine equations}, Ann. of Math. (2) \textbf{74} (1961),
  425--436. \MR{133227}

\bibitem[FP15]{Fuchs-Pham2015}
C.~Fuchs and D.~H. Pham, \emph{The {$p$}-adic analytic subgroup theorem
  revisited}, p-Adic Numbers Ultrametric Anal. Appl. \textbf{7} (2015), no.~2,
  143--156. \MR{3343453}

\bibitem[GFP20]{GarciaFritz-Pasten2020}
Natalia Garcia-Fritz and Hector Pasten, \emph{Towards {H}ilbert's tenth problem
  for rings of integers through {I}wasawa theory and {H}eegner points}, Math.
  Ann. \textbf{377} (2020), no.~3-4, 989--1013. \MR{4126887}

\bibitem[KLS24]{Kundu-Lei-Sprung2024}
Debanjana Kundu, Antonio Lei, and Florian Sprung, \emph{Studying {H}ilbert's
  10th problem via explicit elliptic curves}, Math. Ann. \textbf{390} (2024),
  no.~4, 5153--5183. \MR{4816107}

\bibitem[KP25]{Koymans-Pagano2025+}
Peter Koymans and Carlo Pagano, \emph{{Hilbert's} tenth problem via additive
  combinatorics}, \url{https://arxiv.org/abs/2412.01768v2} [math.NT], 2025.

\bibitem[Mat70]{Matiyasevich1970}
Yuri Matiyasevich, \emph{The {D}iophantineness of enumerable sets}, Dokl. Akad.
  Nauk SSSR \textbf{191} (1970), 279--282. \MR{258744}

\bibitem[Mat10]{Matev2010+}
Tzanko Matev, \emph{The $p$-adic analytic subgroup theorem and applications},
  \url{https://arxiv.org/abs/1010.3156v1} [math.NT], 2010.

\bibitem[MP18]{Murty-Pasten2018}
M.~Ram Murty and Hector Pasten, \emph{Elliptic curves, {$L$}-functions, and
  {H}ilbert's tenth problem}, J. Number Theory \textbf{182} (2018), 1--18.
  \MR{3703929}

\bibitem[MR10]{Mazur-Rubin2010}
B.~Mazur and K.~Rubin, \emph{Ranks of twists of elliptic curves and {H}ilbert's
  tenth problem}, Invent. Math. \textbf{181} (2010), no.~3, 541--575.
  \MR{2660452}

\bibitem[MR18]{Mazur-Rubin2018}
Barry Mazur and Karl Rubin, \emph{Diophantine stability}, Amer. J. Math.
  \textbf{140} (2018), no.~3, 571--616, With an appendix by Michael Larsen.
  \MR{3805014}

\bibitem[MRS24]{Mazur-Rubin-Shlapentokh2024-existential}
Barry Mazur, Karl Rubin, and Alexandra Shlapentokh, \emph{Existential
  definability and diophantine stability}, J. Number Theory \textbf{254}
  (2024), 1--64, corrigendum in J.\ Number Theory \textbf{262} (2024),
  539--540. \MR{4633727}

\bibitem[Pas23]{Pasten2023}
H\'ector Past\'en, \emph{Elliptic surfaces and {H}ilbert's tenth problem}, Rev.
  Mat. Teor. Apl. \textbf{30} (2023), no.~1, 113--120. \MR{4538034}

\bibitem[Phe88]{Pheidas1988}
Thanases Pheidas, \emph{Hilbert's tenth problem for a class of rings of
  algebraic integers}, Proc. Amer. Math. Soc. \textbf{104} (1988), no.~2,
  611--620. \MR{962837}

\bibitem[Poo02]{Poonen2002}
Bjorn Poonen, \emph{Using elliptic curves of rank one towards the
  undecidability of {H}ilbert's tenth problem over rings of algebraic
  integers}, Algorithmic number theory ({S}ydney, 2002), Lecture Notes in
  Comput. Sci., vol. 2369, Springer, Berlin, 2002, pp.~33--42. \MR{2041072}

\bibitem[RW24]{Ray-Weston2024}
Anwesh Ray and Tom Weston, \emph{Hilbert's tenth problem in anticyclotomic
  towers of number fields}, Trans. Amer. Math. Soc. \textbf{377} (2024), no.~5,
  3577--3597. \MR{4744788}

\bibitem[Shl89]{Shlapentokh1989}
Alexandra Shlapentokh, \emph{Extension of {H}ilbert's tenth problem to some
  algebraic number fields}, Comm. Pure Appl. Math. \textbf{42} (1989), no.~7,
  939--962. \MR{1008797}

\bibitem[Shl08]{Shlapentokh2008}
\bysame, \emph{Elliptic curves retaining their rank in finite extensions and
  {H}ilbert's tenth problem for rings of algebraic numbers}, Trans. Amer. Math.
  Soc. \textbf{360} (2008), no.~7, 3541--3555. \MR{2386235}

\bibitem[SW23]{Shnidman-Weiss2023}
Ari Shnidman and Ariel Weiss, \emph{Rank growth of elliptic curves over
  {$n$}-th root extensions}, Trans. Amer. Math. Soc. Ser. B \textbf{10} (2023),
  482--506. \MR{4575766}

\bibitem[Zyw25]{Zywina2025+}
David Zywina, \emph{Rank one elliptic curves and rank stability},
  \url{https://arxiv.org/abs/2505.16960v1} [math.NT], 2025.

\end{thebibliography}

\end{document}